\newcommand{\bburl}[1]{\textcolor{blue}{\url{#1}}}
\newcommand{\monthyear}[1]{%
  \def\@monthyear{\uppercase{#1}}}
\newcommand{\volnumber}[1]{%
  \def\@volnumber{\uppercase{#1}}}
\def\ps@plain{\ps@empty
  \def\@oddfoot{\@monthyear \hfil \thepage}%
  \def\@evenfoot{\thepage \hfil \@volnumber}}
\def\ps@firstpage{\ps@plain}
\def\ps@headings{\ps@empty
  \def\@evenhead{%
    \setTrue{runhead}%
    \def\thanks{\protect\thanks@warning}%
    \uppercase{\ }\hfil}%
  \def\@oddhead{%
    \setTrue{runhead}%
    \def\thanks{\protect\thanks@warning}%
    \hfill\uppercase{Gaussian Behavior in Zeckendorf Decompositions From Lattices}}%
  \let\@mkboth\markboth
  \def\@evenfoot{%
    \thepage \hfil \@volnumber}%
  \def\@oddfoot{%
    \@monthyear \hfil \thepage}%
  }%
\theoremstyle{plain}
\numberwithin{equation}{section}
\newtheorem{thm}{Theorem}[section]
\newtheorem{theorem}[thm]{Theorem}
\newtheorem{lemma}[thm]{Lemma}
\newcommand{\ncr}[2]{{#1 \choose #2}}
\newcommand{\ignore}[1]{}
\newcommand{\Var}{{\rm Var}}
\newcommand\be{\begin{eqnarray}}
\newcommand\ee{\end{eqnarray}}
\newcommand\bea{\begin{eqnarray}}
\newcommand\eea{\end{eqnarray}}
\newcommand\ben{\begin{enumerate}}
\newcommand\een{\end{enumerate}}
\newcommand{\N}{\mathbb{N}}
\begin{document}

\monthyear{June 2019}
\volnumber{Volume, Number}
\setcounter{page}{1}
\title{Central Limit Theorems for Compound Paths on the 2-Dimensional Lattice}

\author{Evan Fang, Jonathan Jenkins, Zack Lee, Daniel Li, Ethan Lu, Steven J. Miller, Dilhan Salgado, Joshua Siktar}


\address{\tiny{Department of Mathematical Sciences, Carnegie Mellon University, Pittsburgh, PA 15213}} \email{evanfang@andrew.cmu.edu}

\address{\tiny{Department of Mathematical Sciences, Carnegie Mellon University, Pittsburgh, PA 15213}} \email{jtjenkin@andrew.cmu.edu}

\address{\tiny{Department of Mathematical Sciences, Carnegie Mellon University, Pittsburgh, PA 15213}} \email{ztlee@andrew.cmu.edu}

\address{\tiny{Department of Mathematical Sciences, Carnegie Mellon University, Pittsburgh, PA 15213}} \email{drli@andrew.cmu.edu}

\address{\tiny{Department of Mathematical Sciences, Carnegie Mellon University, Pittsburgh, PA 15213}} \email{ethanlu@andrew.cmu.edu}

\address{\tiny{Department of Mathematical Sciences, Carnegie Mellon University, Pittsburgh, PA 15213, and Department of Mathematics and Statistics, Williams College, Williamstown, MA 01267}} \email{sjm1@williams.edu, Steven.Miller.MC.96@aya.yale.edu}

\address{\tiny{Department of Mathematical Sciences, Carnegie Mellon University, Pittsburgh, PA 15213}} \email{dsalgado@andrew.cmu.edu}

\address{\tiny{Department of Mathematical Sciences, Carnegie Mellon University, Pittsburgh, PA 15213}} \email{jsiktar@alumni.cmu.edu}




\date{\today}

\begin{abstract} Zeckendorf proved that every integer can be written uniquely as a sum of non-consecutive Fibonacci numbers $\{F_n\}$, and later researchers showed that the distribution of the number of summands needed for such decompositions of integers in $[F_n, F_{n+1})$ converges to a Gaussian as $n\to\infty$. Decomposition problems have been studied extensively for a variety of different sequences and notions of a legal decompositions; for the Fibonacci numbers, a legal decomposition is one for which each summand is used at most once and no two consecutive summands may be chosen. Recently, Chen et al. \cite{CCGJMSY} generalized earlier work to $d$-dimensional lattices of positive integers; there, a legal decomposition is a path such that every point chosen had each component strictly less than the component of the previous chosen point in the path. They were able to prove Gaussianity results despite the lack of uniqueness of the decompositions; however, their results should hold in the more general case where some components are identical. The strictly decreasing assumption was needed in that work to obtain simple, closed form combinatorial expressions, which could then be well approximated and led to the limiting behavior. In this work we remove that assumption through inclusion-exclusion arguments. These lead to more involved combinatorial sums; using generating functions and recurrence relations we obtain tractable forms in $2$ dimensions and prove Gaussianity again; a more involved analysis should work in higher dimensions.
\end{abstract}

\thanks{This work was supported by NSF Grant DMS1561945, Carnegie Mellon and Williams College.}

\maketitle

\tableofcontents
\section{Introduction}


Among the many fascinating properties of the Fibonacci numbers is the following observation, credited to Zeckendorf \cite{Ze}: Every positive integer admits a unique representation as a sum of non-adjacent Fibonacci numbers $\{F_n\}$, where\footnote{If we started with $F_0 = 0$ and $F_1 = 1$, then $F_2 = 1$ and we trivially lose uniqueness.} $F_1 = 1, F_2 = 2$ and $F_{n+1} = F_{n} + F_{n-1}$. Interestingly we can treat this property as an equivalent definition of the Fibonacci numbers: they are the only sequence from which every positive integer can be decomposed uniquely as a sum of non-adjacent terms. It turns out that there is often a relationship between rules for legal decompositions and sequences $\{G_n\}$, and the literature is now filled with many results on properties of the summands in legal decompositions of numbers in intervals $[G_n, G_{n+1})$ as $n\to\infty$. These range from the mean number of summands growing linearly, with the factor related to the roots of the characteristic polynomial of the recurrence, to the distribution of the number of summands converging to a Gaussian, to the distribution of gaps between summands; see for example \cite{Bes,Bow,Br,Day,Dem,FGNPT,Fr,GTNP,Ha, Ho, HW, Ke,Lek,Mw1,Mw2,Ste1,Ste2} and the references therein.


Most of the sequences studied have been one-dimensional. Additional sequences, such as those in \cite{CFHMN2,CFHMNPX}, appear two-dimensional but can be converted into one-dimensional sequences and attacked using existing techniques. This motivated Chen et. al. \cite{CCGJMSY} to consider a true multi-dimensional sequence by looking at paths among lattice points with non-negative integer coefficients. They defined a legal decomposition in $d$-dimensions to be a finite collection of lattice points for which
\begin{enumerate}
\item{each point is used at most once}, and
\item{if the point $(i_1, i_2, \dots, i_d)$ is included then all subsequent points $(i_1', i_2', \dots, i_d')$ have $i'_j <  i_j$ for all $j \in \{1, 2, \dots, d\}$ (i.e., \emph{all} coordinates must decrease between point in the decomposition and the next one).}
\end{enumerate}

They called the path of chosen lattice points a \textbf{simple jump path}; at each step, each component was \emph{strictly less than} the corresponding component of the previous step. One can construct a sequence on the lattice in many ways. For example, in two dimensions one can go along diagonal paths parallel to $y=-x$ and at each lattice point adding the first number which cannot be legally represented. The situation is slightly more involved in higher dimensions, though for most of the problems studied the values of the ordered points do not matter; what matters is the geometry of the lattice walks. In \eqref{ZeckendorfDiagonalSequenceSimp2D} we illustrate several diagonals' worth of entries when $d = 2$. Unlike for the Fibonacci sequence, we find that the uniqueness of these decompositions fails (for example, $25$ has two legal decompositions: $20+5$ and $24+1$).
\begin{eqnarray}
\begin{array}{cccccccccc}280 & \cdots & \cdots & \cdots & \cdots & \cdots & \cdots & \cdots & \cdots & \cdots \\157 & 263 & \cdots & \cdots & \cdots & \cdots & \cdots & \cdots & \cdots & \cdots \\84 & 155 & 259 & \cdots & \cdots & \cdots & \cdots & \cdots & \cdots & \cdots \\50 & 82 & 139 & 230 & \cdots & \cdots & \cdots & \cdots & \cdots & \cdots \\28 & 48 & 74 & 123 & 198 & \cdots & \cdots & \cdots & \cdots & \cdots \\14 & 24 & 40 & 66 & 107 & 184 & \cdots & \cdots & \cdots & \cdots \\7 & 12 & 20 & 33 & 59 & 100 & 171 & \cdots & \cdots & \cdots \\3 & 5 & 9 & 17 & 30 & 56 & 93 & 160 & \cdots & \cdots \\1 & 2 & 4 & 8 & 16 & 29 & 54 & 90 & 159 & \cdots \end{array}
\label{ZeckendorfDiagonalSequenceSimp2D}
\end{eqnarray}

These simple jump paths have a severe limitation as every coordinate must decrease. Thus in \eqref{ZeckendorfDiagonalSequenceSimp2D} we had to add 5 to our 2-dimensional sequence in the $(2,3)$ location, as we cannot use $4+1$ to get 5 as that is only horizontal movement. This strict decreasing condition was needed in \cite{CCGJMSY} to obtain simple closed form combinatorial expressions, which were then well approximated. Finally, this led to a proof that the limiting behavior of the distribution of the number of summands converges to a Gaussian.

This alternative, more general formulation leads to what we call a \(\textbf{generalized jump path}\). Formally, a generalized jump path from the lattice point \(p \in \mathbb{N}^d\) is a sequence of lattice points where each point is used only once, and if the point \((i_1, i_2, \dots, i_d)\) is in the sequence, then each subsequent point \((i_1', i_2', \dots, i_d')\) must satisfy the following properties:

\begin{itemize}

\item for all \(j\in \{1,2,\dots, d\}\), \(i_j \geq i_j'\), and

\item for at least one \(k \in \{1,2, \dots, d\}\), \(i_k > i_k'\).

\end{itemize}

These conditions imply that for each point in the sequence, at least one coordinate must decrease while the remaining coordinates cannot increase.
Below is the number of generalized jump paths in two dimensions from the point \((i,j)\) for \(i,j \leq 3\).
\begin{equation}
    \begin{matrix}\label{generalizedJumpPathsSmallPoints}
        \vdots & \vdots & \vdots & \vdots & \iddots \\
        8      & 40     & 152    & 504    & \cdots  \\
        4      & 16     & 52     & 152    & \cdots  \\
        2      & 6      & 16     & 40     & \cdots  \\
        1      & 2      & 4      & 8      & \cdots
    \end{matrix}
\end{equation}

We can find a recursive formula which allows us to calculate the total number of generalized jump paths starting at the point \(p=(p_1,p_2, \dots, p_d)\). For convenience we include the requirement that all paths end at one point outside the lattice, at the origin; note moving directly to the origin is equivalent to not choosing any additional points in a path and is thus considered a legal option. If \(S((p_1, p_2, \dots, p_d))\) represents the total number of paths starting at $p$, then we may do the following: partition all the generalized jump paths from \(p\) by the location of their first step. Either the path goes directly from \(p\) to the origin, or \(p\) first goes to some other lattice point \(a\). Note \(a\) must have at least one coordinate, say \(a_i\), such that \(a_i < p_i\), hence \(a \in \{[0,p_1] \times [0,p_2] \times \cdots \times [0, p_d]\}\setminus\{p\}\). By definition there are \(S(a)\) paths from the point $a$. Hence, summing over all possible \(a\) and the one additional case (immediately jumping to the origin, or equivalently not choosing any additional lattice points), we find a formula for \(S(p)\):
\begin{equation}\label{totalPathsRecurrence}
    S(p) \ = \  1 + \sum\limits_{\substack{a\neq p, \\ {a\in [0,p_1]\times [0, p_2]\times \cdots}}} S(a).
\end{equation}

We are able to perform an asymptotic analysis both on the number of generalized jump paths and the number of such paths of length $k$. Our main result is as follows.

\begin{thm}\label{thm:main} Let $X_{p, q}$ be the random variable denoting the number of generalized jump paths starting at a point $(p, q) \in \mathbb{N}^2$ and ending at the origin. Suppose $p := n$ and $q := cn$ for $n \in \N^+$ and $c \geq 1$ is fixed. Then $X_{p, q}$ converges to a Gaussian as $n \rightarrow \infty$, with mean $\frac{q + p + \sqrt{p^2 + 6pq + q^2}}{4}$ and variance $\frac{p+q}8+\frac{(p+q)^2}{8\sqrt{p^2+6pq+q^2}}$.
\end{thm}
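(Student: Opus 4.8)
The plan is to set up a generating function that tracks both the total count of generalized jump paths and the number of summands (path length), then extract asymptotics via a saddle-point / singularity analysis. First I would define $S_k(p,q)$ to be the number of generalized jump paths from $(p,q)$ to the origin using exactly $k$ chosen lattice points, so that $X_{p,q}$ takes value $k$ with probability $S_k(p,q)/S(p,q)$ where $S(p,q) = \sum_k S_k(p,q)$. Introducing a formal variable $x$ marking path length, set $T(p,q;x) = \sum_k S_k(p,q)\, x^k$; the recurrence \eqref{totalPathsRecurrence} refines to
\begin{equation}
T(p,q;x) \ = \ 1 + x \sum_{\substack{(a,b)\neq(p,q)\\ 0\le a\le p,\ 0\le b\le q}} T(a,b;x),
\end{equation}
since each first step contributes one summand. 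The key device is to pass to a two-variable (or two-and-a-half variable) generating function $F(u,v;x) = \sum_{p,q\ge 0} T(p,q;x)\, u^p v^q$; the lattice-box sum in the recurrence becomes multiplication by $\tfrac{1}{(1-u)(1-v)}$ (partial-summation kernel), so one obtains a closed rational-type expression for $F$. Here is where the inclusion–exclusion flavor advertised in the introduction enters: removing the single excluded term $(a,b)=(p,q)$ from the box sum is exactly the correction that makes the combinatorics tractable, and it manifests as a clean algebraic manipulation at the level of $F$.

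Next I would solve for $F(u,v;x)$ explicitly. The functional equation will have the shape $F = \tfrac{1}{(1-u)(1-v)} + \tfrac{x}{(1-u)(1-v)}\big(F - F|_{\text{diagonal correction}}\big)$, and after clearing denominators $F$ is a ratio whose denominator is essentially $(1-u)(1-v) - x\big(1 - (1-u)(1-v)\big)$ up to bookkeeping of the excluded-point term. Specializing to the line $q = cn$, $p = n$, I would extract the coefficient $[u^p v^q] F(u,v;x)$ by a double contour integral and apply the saddle-point method in $n$: the exponential growth rate is governed by the dominant singularity of $F$ in $(u,v)$ for each fixed $x$, which is the vanishing of that denominator. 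Writing that singularity location as $\rho(x)$ along the relevant direction, the count $S_k$ behaves like $\rho(x)^{-n}$ to leading order, and the probability generating function $\E[x^{X_{p,q}}] = T(p,q;x)/T(p,q;1)$ concentrates as $(\rho(1)/\rho(x))^{n}$ times lower-order factors.

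Finally I would run the standard "quasi-power" argument (Hwang's theorem / the method of moments on $\log \E[x^{X}]$): if $\E[e^{t X_{p,q}}]$ equals $e^{n\, \phi(t) + O(1)}$ with $\phi$ analytic near $0$, then $X_{p,q}$ is asymptotically Gaussian with mean $\sim n\,\phi'(0)$ and variance $\sim n\,\phi''(0)$. Computing $\phi(t) = \log\rho(1) - \log\rho(e^t)$ requires implicit differentiation of the singularity equation; the mean and variance then come out as rational expressions in $p,q$ and $\sqrt{p^2+6pq+q^2}$ — the surd being the discriminant of the quadratic (in $u$ or in the singularity parameter) that defines $\rho$, which is why $\sqrt{p^2+6pq+q^2}$ appears in the stated formulas. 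I expect the main obstacle to be the bivariate singularity/saddle-point analysis: one must verify that along the ray $(n, cn)$ the dominant contribution genuinely comes from a single non-degenerate saddle (no coalescing saddles, the singularity is a simple pole in the transverse direction, and the boundary terms from truncating the box sum are negligible), and one must check uniformity in $t$ near $0$ so that the quasi-power theorem applies. The algebraic identification of the mean and variance with the closed forms in Theorem~\ref{thm:main} is then a routine but lengthy computation once $\rho(x)$ is in hand.
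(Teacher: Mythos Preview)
Your proposal outlines a genuinely different route from the paper's. The paper does begin similarly---it sets up the trivariate generating function $B(x,y,z)=1/(1-(1+z)(x+y-xy))$, which after relabeling and the unrestricted/restricted adjustment of Lemma~\ref{unrestricted-restricted} matches what your functional equation would produce. But rather than perform a bivariate saddle-point/ACSV extraction along the ray $(n,cn)$ and invoke quasi-powers, the paper proves the explicit factorization $F_{p,q}(x)=(1+x)^p\sum_{k=0}^{q}\binom{q}{k}\binom{p+k}{k}x^k$ (Theorem~\ref{pathLenGenFn}), either by diagonal extraction leading to Legendre polynomials (for $p=q$) or by a combinatorial inclusion--exclusion on \emph{stationary points} (general $p,q$). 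That, incidentally, is the inclusion--exclusion the introduction advertises---relaxing the ``no repeated point'' rule and correcting---not the removal of the single term $(a,b)=(p,q)$ from the box sum that you identify. The factorization yields the independent decomposition $X_{p,q}=A_{p,q}+B_{p,q}$ with $A$ binomial, so the Gaussian limit reduces to a direct Stirling/Taylor analysis of $P(B=k)\propto\binom{q}{k}\binom{p+k}{k}$ around $k=an+t\sqrt{n}$, plus an elementary tail bound.

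Your approach keeps the problem packaged as a single bivariate coefficient extraction and appeals to Hwang's theorem; this is heavier machinery (you correctly flag the Pemantle--Wilson non-degeneracy checks and the uniformity in $t$ as the real work), but it sidesteps discovering the factorization identity and would generalize more mechanically to higher dimensions or to other step rules. The paper's argument is more elementary and self-contained once the factorization is known, and it makes the surd $\sqrt{p^2+6pq+q^2}$ appear concretely as the location of the maximum of $k\mapsto\binom{q}{k}\binom{p+k}{k}$ rather than as the discriminant of the singular variety.
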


In Section \ref{sec2}, we introduce some notation for our problem and prove some basic properties of unrestricted generalized jump paths. In Section \ref{sec3}, we use these properties in conjunction with various analytical and combinatorial methods to obtain a generating function for the number of paths to a fixed point as a function of path length. Using that result, we prove the Gaussianity in the limit of the number of summands in decompositions (Theorem \ref{thm:main}). We use a method similar to that found in \cite{CCGJMSY}; the difficulty in the argument is in determining a good count of the number of paths and, from that, a good estimate for the number of paths of a given length. We conclude with a brief discussion of future questions to study.

\section{Properties of Generalized Jump Paths} \label{sec2}

We define a \textbf{generalized jump path} of length $n$ that starts at $(p_1, p_2, \dots, p_d)$ to be a sequence of points $\{(x_{i,1},x_{i,2},\dots,x_{i,d})\}_{i=0}^n$ such that:
\begin{itemize}
    \item $(x_{0,1},x_{0,2},\dots,x_{0,d}) = (p_1, p_2, \dots, p_d)$,
    \item for all $i$ and $j$ we have  $x_{i,j} \geq x_{i+1,j}$,
    \item for all $i$ we have $(x_{i,1},x_{i,2},\dots,x_{i,d}) \neq (x_{i+1,1},x_{i+1,2},\dots,x_{i+1,d})$ \footnote{A natural future question would be to remove this condition, which would allow the same point to be used multiple times in a decomposition. See Section \ref{s:conclusion} for more information.}, and
    \item $(x_{n,1},x_{n,2},\dots,x_{n,d}) = (0, 0, \dots, 0)$.
\end{itemize}

Let $g((p_1, p_2, \dots, p_d),n)$ be the number of such paths of length $n$ starting at $(p_1, p_2, \dots, p_d)$. We define an \textbf{unrestricted generalized jump path} to be a sequence of points $\{(x_{i,1},x_{i,2},\dots,x_{i,d})\}_{i=0}^n$ such that:
\begin{itemize}
    \item $(x_{0,1},x_{0,2},\dots,x_{0,d}) = (p_1, p_2, \dots, p_d)$,
    \item for all $i$ and $j$ we have $x_{i,j} \geq x_{i+1,j}$, and
    \item for all $i$  we have $(x_{i,1},x_{i,2},\dots,x_{i,d}) \neq (x_{i+1,1},x_{i+1,2},\dots,x_{i+1,d})$.
\end{itemize}
Moreover, let $u((p_1,p_2,\dots,p_d),n)$ be the number of \emph{unrestricted} generalized jump paths starting from the point $(p_1, p_2, \dots, p_d);$ this is analogous to the definition of $g$.
Note that unrestricted generalized jump paths are simply generalized jump paths with the last restriction lifted (i.e., the sequence does not need to end at the bottom left corner).

We now establish and prove two basic properties for $g$ and $u$, the first of which was alluded to when we defined $u$.

\begin{lemma}[Unrestricted-Restricted Relationship] \label{unrestricted-restricted}
Let $v := (p_1,p_2,\dots,p_d)$. For all $n \in \mathbb{N}$,
    \be \label{unrestricted-restricted-recurrence-eq1}
        u(v,n) \ = \  g(v,n) + g(v,n+1).
    \ee
\end{lemma}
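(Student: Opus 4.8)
The plan is to set up a bijective/partition argument on unrestricted generalized jump paths according to whether or not the path ever reaches the origin. Fix $v = (p_1,\dots,p_d)$ and $n$. An unrestricted generalized jump path of length $n$ starting at $v$ is a sequence $(x_0,x_1,\dots,x_n)$ with $x_0 = v$, each coordinate weakly decreasing at every step, and no two consecutive points equal — but with no constraint on $x_n$. I would split the set $\mathcal U$ of such paths into two classes: $\mathcal U_0$, those for which $x_n = (0,\dots,0)$, and $\mathcal U_+$, those for which $x_n \neq (0,\dots,0)$.

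The first class is immediate: $\mathcal U_0$ is by definition exactly the set of generalized jump paths of length $n$ from $v$, so $|\mathcal U_0| = g(v,n)$. For the second class, I claim there is a bijection $\mathcal U_+ \to \{\text{generalized jump paths of length } n+1 \text{ from } v\}$ given by appending the origin: $(x_0,\dots,x_n) \mapsto (x_0,\dots,x_n,(0,\dots,0))$. The key point to check is that the appended sequence is legal: the step from $x_n$ to $(0,\dots,0)$ is coordinatewise weakly decreasing (trivially, since the origin is the minimum), and it is a genuine step, i.e. $x_n \neq (0,\dots,0)$, which holds precisely because we are in $\mathcal U_+$. Conversely, given any generalized jump path of length $n+1$ from $v$, its last point is the origin, so deleting that last point yields a length-$n$ sequence starting at $v$ that is still weakly decreasing with no repeated consecutive points, and whose final point is not the origin (otherwise the last step of the original path would be a repetition, violating the strict-step condition). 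Hence the map is a bijection and $|\mathcal U_+| = g(v,n+1)$.

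Adding the two counts gives $u(v,n) = |\mathcal U_0| + |\mathcal U_+| = g(v,n) + g(v,n+1)$, which is \eqref{unrestricted-restricted-recurrence-eq1}. One small edge case deserves attention in the write-up: when $v = (0,\dots,0)$, there are no legal steps at all, so $u(v,n) = 0$ for $n \geq 1$ and the only path of length $0$ is the trivial one; the identity still holds since $g((0,\dots,0),n)$ vanishes for $n\ge 1$. The argument is essentially bookkeeping, so I do not expect a real obstacle; the one place to be careful is verifying that deleting the terminal origin from a length-$(n+1)$ restricted path cannot leave a path whose new final point is again the origin — this is exactly where the ``no equal consecutive points'' clause in the definition of a generalized jump path is used, and it should be stated explicitly rather than glossed over.
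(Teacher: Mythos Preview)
Your proof is correct and follows exactly the same approach as the paper: partition the unrestricted paths of length $n$ according to whether the final point is the origin, and set up the append/delete-the-origin bijection between those not ending at the origin and the restricted paths of length $n+1$. The paper states this bijection in a single sentence; you have simply filled in the verification (including the use of the no-repeated-consecutive-points clause) that the paper leaves implicit.
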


\begin{proof}
    The set of unrestricted jump paths of length $n$ that do not end at $(0,0,\dots,0)$ is bijective to the set of restricted jump paths of length $n+1$ that end at $(0,0,\dots,0)$. This immediately implies the result.
\end{proof}

\begin{theorem}[2-Dimensional Path Recurrence] \label{2d-recurrence}
    For all $p, q, n \in \mathbb{N}^+$,
    \begin{align} \label{2dimPathRecEqA}
        u((p,q),n) \ = \ ~ & u((p,q-1),n) + u((p,q-1),n-1) \nonumber        \\
                      & + u((p-1,q),n) + u((p-1,q),n-1)    \nonumber  \\
                      & - u((p-1,q-1),n) - u((p-1,q-1),n-1).
    \end{align}
\end{theorem}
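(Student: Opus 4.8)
The plan is to establish the identity by an inclusion–exclusion argument on the location of the first step of an unrestricted generalized jump path starting at $(p,q)$. Since $p,q \geq 1$, the first point after $(p,q)$ in such a path is some lattice point $(a,b)$ with $0 \le a \le p$, $0 \le b \le q$, and $(a,b) \ne (p,q)$; after that first step, the remainder of the path is an arbitrary unrestricted generalized jump path of length $n-1$ starting at $(a,b)$. Thus $u((p,q),n)$ counts pairs (first step to $(a,b)$, tail of length $n-1$ from $(a,b)$), and the count of tails is $u((a,b),n-1)$, while $u((p,q),0)=1$ by convention with the degenerate path. Wait — more carefully: I would instead count by \emph{whether the first coordinate strictly decreases on the first step}. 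Let me describe the cleaner route.

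First I would reinterpret $u((p,q),n)$ combinatorially: a path of length $n$ from $(p,q)$ is determined by its first vertex $(p,q)$ together with a path of length $n-1$ from the second vertex $(a,b)$, where $(a,b)$ ranges over $[0,p]\times[0,q]\setminus\{(p,q)\}$. Hence
\[
u((p,q),n) \;=\; \sum_{(a,b)\in[0,p]\times[0,q]\setminus\{(p,q)\}} u((a,b),n-1).
\]
Now partition the index set $[0,p]\times[0,q]\setminus\{(p,q)\}$ according to which of the two "boxes" $[0,p-1]\times[0,q]$ and $[0,p]\times[0,q-1]$ the point $(a,b)$ lies in: every such $(a,b)$ lies in at least one of these two boxes (since it cannot equal $(p,q)$, either $a\le p-1$ or $b\le q-1$), and the overlap is exactly $[0,p-1]\times[0,q-1]$. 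By inclusion–exclusion,
\[
\sum_{(a,b)\in[0,p]\times[0,q]\setminus\{(p,q)\}} u((a,b),n-1)
= \sum_{[0,p-1]\times[0,q]} u\, + \sum_{[0,p]\times[0,q-1]} u\, - \sum_{[0,p-1]\times[0,q-1]} u.
\]
Then I would recognize each of the three sums on the right: $\sum_{(a,b)\in[0,p-1]\times[0,q]} u((a,b),n-1)$ is, by the same first-step bijection applied in reverse, almost $u((p-1,q'),n)$ for a suitable endpoint — but the box $[0,p-1]\times[0,q]$ is \emph{not} of the form $[0,P]\times[0,Q]\setminus\{(P,Q)\}$, so this needs one extra bookkeeping step, namely splitting off the "diagonal" point. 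The honest statement is: for any $P,Q\ge 0$,
\[
\sum_{(a,b)\in[0,P]\times[0,Q]} u((a,b),m) \;=\; u((P,Q),m+1) + u((P,Q),m),
\]
since the sum over the full box (including $(P,Q)$ itself) counts: paths of length $m+1$ from $(P,Q)$ whose second vertex lies in the box (that's the punctured-box sum, giving $u((P,Q),m+1)$), plus the single term $u((P,Q),m)$ for $(a,b)=(P,Q)$. Applying this with $(P,Q)=(p-1,q),(p,q-1),(p-1,q-1)$ and $m=n-1$ turns the three box-sums into $u((p-1,q),n)+u((p-1,q),n-1)$, $u((p,q-1),n)+u((p,q-1),n-1)$, and $u((p-1,q-1),n)+u((p-1,q-1),n-1)$ respectively; substituting these into the inclusion–exclusion identity yields exactly \eqref{2dimPathRecEqA}.

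The main obstacle is getting the boundary and degenerate cases exactly right: one must check that the full-box sum identity holds when $m=0$ or when $P$ or $Q$ is $0$ (where some $u(\cdot,-1)$ or $u((0,\cdot),\cdot)$ terms would appear), and confirm the statement is only asserted for $p,q,n\in\mathbb{N}^+$ so that all the shifted arguments $p-1,q-1,n-1$ are still nonnegative. I would also double-check the convention that a length-$0$ path is the single stationary point, so $u((P,Q),0)=1$, which is what makes the "$+u((P,Q),m)$" term in the full-box identity come out correctly. Everything else is routine rearrangement of the three sums.
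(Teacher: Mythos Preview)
Your argument is correct. Both your proof and the paper's are first-step analyses, but they are organized differently. The paper introduces three auxiliary counts $\ell,d,m$ according to whether the first jump moves strictly left only, strictly down only, or strictly in both coordinates; it then writes down one-step recurrences for $\ell,d,m$ separately and recombines them via $u=\ell+d+m$. You instead work directly with the full first-step expansion $u((p,q),n)=\sum_{(a,b)\in[0,p]\times[0,q]\setminus\{(p,q)\}}u((a,b),n-1)$, apply inclusion--exclusion on the covering of the punctured box by $[0,p-1]\times[0,q]$ and $[0,p]\times[0,q-1]$, and then invoke the clean ``full-box identity'' $\sum_{[0,P]\times[0,Q]}u(\cdot,m)=u((P,Q),m+1)+u((P,Q),m)$ to convert each box sum back into values of $u$. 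Your route avoids the auxiliary functions entirely and makes the inclusion--exclusion structure of the recurrence transparent; the paper's route makes the geometric meaning of each of the six terms (which ``direction'' it comes from) more explicit. The boundary checks you flag ($n=1$, $u(\cdot,0)=1$) are exactly the right ones to verify, and they go through without difficulty under the hypothesis $p,q,n\in\mathbb{N}^+$.
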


\begin{proof}
    Let functions $\ell((p,q),n), d((p,q),n)$, and $m((p,q),n)$ denote the number of unrestricted jump paths starting from the point $(p,q)$ whose first jump is directly \emph{left}, directly \emph{down}, and both \emph{left and down}, respectively, in $n$ jumps. Then,
    \begin{align}\label{2dimPathRecEqAA}
        \ell((p,q),n)  & \ = \  \ell((p-1,q),n) + u((p-1,q),n-1)        \nonumber\\
        d((p,q),n) & \ = \  d((p,q-1),n) + u((p,q-1),n-1)           \nonumber\\
        m((p,q),n) & \ = \  d((p-1,q),n) + m((p-1,q),n)          \nonumber\\
                   & \ = \  \ell((p,q-1),n) + m((p,q-1),n)  \nonumber\\
                   & \ = \  u((p-1, q-1), n-1) + u((p-1, q-1), n).
    \end{align}

    Notice by the definitions of the functions $\ell((p, q), n), d((p, q), n)$, and $m((p, q), n)$ that
    \be \label{oneDirectionPartition}
    u((p,q),n) \ = \ \ell((p,q),n) + d((p,q),n) + m((p,q),n).
    \ee

    Using identity \eqref{oneDirectionPartition} gives
      \begin{align}
        u((p,q),n) \ = \ \ ~ & \ell((p-1,q),n) + u((p-1,q),n-1)        \nonumber      \\
                      & + d((p,q-1),n) + u((p,q-1),n-1)            \nonumber   \\
                      & + d((p-1,q),n) + m((p-1,q),n)         \nonumber  \\
                      & + \ell((p,q-1),n) + m((p,q-1),n) \nonumber          \\
                      & - (u((p-1, q-1), n-1) + u((p-1, q-1), n))   \nonumber \\
        \ = \  \ ~            & \ell((p-1,q),n) + d((p-1,q),n) + m((p-1,q),n)      \nonumber       \\
                      & + d((p,q-1),n) + \ell((p,q-1),n) + m((p,q-1),n)      \nonumber     \\
                      & + u((p-1,q),n-1) + u((p,q-1),n-1)                \nonumber      \\
                      & - (u((p-1, q-1), n-1) + u((p-1, q-1), n))       \nonumber       \\
        \ = \ \ ~            & u((p-1,q),n) + u((p,q-1),n)                 \nonumber           \\
                      & + u((p-1,q),n-1) + u((p,q-1),n-1)                  \nonumber  \\
                      & - u((p-1,q-1),n-1) - u((p-1,q-1),n), \label{2dimPathRecEqAA}
    \end{align}
    as desired.
\end{proof}


\section{2-Dimensional Generating Function} \label{sec3}
For every lattice point in $\mathbb{N}^2$ there is a 2-dimensional generating function for the lengths of the paths from that point. Explicitly, we denote $F_{p,q}(x)$ to be
\be \label{f-def}
  F_{p,q}(x) \ = \  \sum_{k=0}^{p+q} u((p,q),k) x^k.
\ee

Our main result in this section is an alternative form for $F_{p,q}$ that is more readily studied using asymptotic techniques.

\begin{theorem}\label{pathLenGenFn} If $p \leq q$, then
    \be
        F_{p,q}(x) \ = \  (1+x)^p\sum_{k=0}^{q} \binom{q}{k}\binom{p+k}{k} x^k.
    \ee
\end{theorem}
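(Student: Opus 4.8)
The plan is to derive a closed-form expression for $u((p,q),n)$ — the number of unrestricted generalized jump paths of length $n$ from $(p,q)$ — and then sum against $x^n$ to recognize the generating function. I would start from the observation that a single jump from $(a,b)$ to $(a',b')$ in an unrestricted path is any move with $a'\le a$, $b'\le b$, and $(a',b')\ne(a,b)$. The core difficulty is that an unrestricted path of length $n$ is a chain $(p,q)=P_0 > P_1 > \cdots > P_n$ in the product order on $\mathbb{N}^2$ where consecutive terms are merely distinct (not strictly decreasing in each coordinate), so the naive product-of-simplices count overcounts: we must use inclusion–exclusion to subtract chains in which some consecutive pair agrees in a coordinate.

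First I would set up the counting coordinate-wise. Writing $P_i=(x_i,y_i)$, the $x$-coordinates form a weakly decreasing sequence $p=x_0\ge x_1\ge\cdots\ge x_n\ge 0$ and likewise the $y$-coordinates, with the sole coupling constraint that $(x_{i-1},y_{i-1})\ne(x_i,y_i)$ for each $i$. The number of weakly decreasing sequences of length $n+1$ starting at $p$ and staying $\ge 0$ with a prescribed set of ``strict-drop'' positions is a product of binomial (stars-and-bars) factors; I would record this and then apply inclusion–exclusion over the set of indices $i$ where we \emph{force} $x_{i-1}=x_i$ (equivalently, we must have a strict $y$-drop there). This should produce, after interchanging sums, an expression of the shape $\sum_j (-1)^j \binom{n}{j}(\text{count of }x\text{-chains with }n-j\text{ free steps})(\text{count of }y\text{-chains with all }n\text{ steps free})$, or a symmetric variant; carefully bookkeeping which of the two coordinates absorbs the forced equalities is where I expect to spend the most effort.

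Then I would pass to generating functions. The key identity I would lean on is that $\sum_{n\ge 0}\binom{n+p}{p}$-type weighted sums telescope nicely: the generating function for weakly decreasing sequences from $p$ to $0$ in $n$ steps, summed against $x^n$, is a rational function, and the factor $(1+x)^p$ should emerge from the coordinate that is allowed to drop strictly or stay (each of the $p$ ``units'' of that coordinate contributes an independent $(1+x)$ choice of whether it is shed at the current step), while $\sum_{k=0}^q\binom{q}{k}\binom{p+k}{k}x^k$ should come from the other coordinate interacting with the path length. Concretely I would try to prove the claimed formula by showing both sides satisfy the recurrence of Theorem~\ref{2d-recurrence} together with the right boundary data $F_{p,0}(x)=(1+x)^p$ and $F_{0,q}(x)=\sum_{k=0}^q\binom{q}{k}x^k=(1+x)^q$ — this sidesteps the inclusion–exclusion bookkeeping entirely and reduces everything to a finite binomial-coefficient verification.

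The main obstacle, on the recurrence-checking route, is confirming that the proposed right-hand side $(1+x)^p\sum_k\binom{q}{k}\binom{p+k}{k}x^k$ satisfies
\[
  F_{p,q}=F_{p,q-1}+xF_{p,q-1}+F_{p-1,q}+xF_{p-1,q}-F_{p-1,q-1}-xF_{p-1,q-1},
\]
i.e. $F_{p,q}=(1+x)\bigl(F_{p,q-1}+F_{p-1,q}-F_{p-1,q-1}\bigr)$; this collapses to a Vandermonde/Pascal-type identity among $\binom{q}{k}\binom{p+k}{k}$, $\binom{q-1}{k}\binom{p+k}{k}$, and $\binom{q}{k}\binom{p-1+k}{k}$ after dividing by $(1+x)^{p-1}$ and matching coefficients of $x^k$. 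I would also need to handle the asymmetry that Theorem~\ref{pathLenGenFn} assumes $p\le q$: the generating function itself is symmetric in $p$ and $q$ (paths are counted the same after swapping axes), so the stated form for $p\le q$ plus symmetry gives the other case, but I should check that the recurrence induction stays within the region $p\le q$ or else seed it from the diagonal. If the direct combinatorial (inclusion–exclusion) derivation is wanted instead, the hard step is the clean resummation of the alternating double sum into the single product form, for which I would use the absorption identity $\sum_{k}\binom{q}{k}\binom{p+k}{k}x^k$ as the target and work backwards.
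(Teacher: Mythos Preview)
Your recurrence-checking route is correct and, in fact, cleaner than what the paper does. Summing Theorem~\ref{2d-recurrence} against $x^n$ gives exactly $F_{p,q}=(1+x)\bigl(F_{p,q-1}+F_{p-1,q}-F_{p-1,q-1}\bigr)$, and matching coefficients of $x^k$ in the candidate $(1+x)^p\sum_k\binom{q}{k}\binom{p+k}{k}x^k$ collapses (after two applications of $\binom{q}{k}-\binom{q-1}{k}=\binom{q-1}{k-1}$) to Pascal's identity $\binom{p+k}{k}=\binom{p+k-1}{k-1}+\binom{p+k-1}{k}$. The boundary checks $F_{p,0}=(1+x)^p$ and $F_{0,q}=(1+x)^q$ are immediate. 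Your worry about staying inside $p\le q$ is unnecessary: the right-hand side of the theorem is in fact valid for all $p,q\ge 0$ (it is symmetric in $p,q$, as one can verify directly or infer from $F_{p,q}=F_{q,p}$), so the induction over $\mathbb{N}^2$ proceeds without any region constraint.

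The paper takes a different path. Its general proof never uses Theorem~\ref{2d-recurrence}; instead it first relaxes to paths allowing stationary points, counts those by stars-and-bars as $r((p,q),n)=\binom{p+n-1}{p}\binom{q+n-1}{q}$, and then corrects via inclusion--exclusion over the number of stationary steps. The resulting alternating sum for $g((p,q),n)$ is then simplified by a sign-reversing involution (toggling the least element of $S\cup(T\cap U)$), giving an explicit closed form for $g$; Lemma~\ref{unrestricted-restricted} converts this to $u$, and the final summation factors as a Cauchy product. What the paper's approach buys is the intermediate formula $g((p,q),n)=\sum_i\binom{p-1}{i}\binom{p-1+n-i}{p}\binom{q}{n-i-1}$ for the \emph{restricted} path counts, which your recurrence method does not produce directly. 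What your approach buys is brevity: it leverages the already-proven Theorem~\ref{2d-recurrence} and reduces the whole theorem to a one-line Pascal identity, whereas the paper's involution argument is a page of bookkeeping.
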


We give two proofs. The first is a pure generating function approach that only proves the claim for the case $p=q=n$, whereas the second is purely combinatorial and proves the theorem in generality. For technical convenience, one often analyzes all paths starting at a point on the main diagonal; thus if we restrict our investigation to this case, the simpler first proof suffices.

\subsection{Pure Generating Functions Method}
Consider the generating function
\begin{align}
    B(x,y,z) \ = \  \sum_{p,q,k \in \mathbb{Z}^+} u((p,q),k)x^py^qz^k.
\end{align}
Using Theorem \ref{2d-recurrence}, we immediately see that
\begin{align}
    B(x,y,z) \ = \  1+(1+z)(x+y-xy)B(x,y,z),
\end{align}
which implies that
\begin{align}
    B(x,y,z) \ = \  \dfrac{1}{1-(1+z)(x+y-xy)}.
\end{align}

Now we use a method adapted from \cite{Stan} to determine the central terms.   Define \be D(x,s,u)\ :=\ B(s,x/s,z),\ee and let $u := 1+z$; thus
\begin{align} \label{centralTermCompA}
    D(x,s,u) \ = \  \dfrac{1}{1-u(s+x/s-x)} \ = \  \dfrac{-s/u}{s^2-s(x+1/u)+x}.
\end{align}
Our generating function for the central terms will be the $s^0$ coefficient of $D(x,s,u)$.

We denote the two solutions for $s$ in the denominator of $D(x,s,u)$ as $\alpha$ and $\beta$, where for ease of reading we suppress the arguments of these functions as follows:
\be \alpha \ := \  \dfrac{ux+1-\sqrt{u^2 x^2 - 4 u^2 x + 2 u x + 1}}{2u},\ \ \ \beta \ := \  \dfrac{ux+1+\sqrt{u^2 x^2 - 4 u^2 x + 2 u x + 1}}{2u}. \ee
Using the method of partial fractions on \eqref{centralTermCompA}, we find that
\begin{align}  \label{centralTermCompB}
    D(x,s,u) \ = \  \frac{1}{u(\beta-\alpha)}\left(\frac{\alpha}{s-a}-\frac{\beta}{s-\beta}\right).
\end{align}
We expand each term by using the geometric series formula, which is applicable for sufficiently small values of the parameters, and get
\begin{align}  \label{centralTermCompC}
    D(x,s,u) \ = \  \frac{1}{u(\beta-\alpha)}\left(\sum_{n\geq1}\alpha^ns^{-n}+\sum_{n\geq0}\beta^{-n}s^n\right).
\end{align}
The $s^0$ term is clearly just $\frac{1}{u(\beta-\alpha)}$ as $\beta^{-0}s^0 = 1$.
Since $\beta-\alpha = \frac{\sqrt{u^2x^2-2u^2x+2ux+1}}{u}$, we have
\begin{align}  \label{centralTermCompD}
    B(x,x,z) \ = \  \frac{1}{\sqrt{u^2x^2-4u^2x+2ux+1}} \ = \  \frac{1}{\sqrt{(ux+1)^2-4u^2x}},
\end{align}
where we used that $u = z+1$.

We now represent $1 / \sqrt{(ux+1)^2-4u^2x}$ as a power series of the form $\sum_{i=0}b_i(z)x^i$. Differentiating, we obtain the recurrence relation
\begin{align} \label{centralTermCompE}
    b_i \ = \
    \begin{cases}
        1                                                         & i = 0   \\
        1+3z+2z^2                                                 & i = 1   \\
        \dfrac{(2i-1)(1+2z)(1+z)b_{i-1} - (1+z)^2(i-1)b_{i-2}}{i} & i \ge 2.
    \end{cases}
\end{align}
We define $a_i$ such that $a_i (1+z)^i = b_i$; then, this sequence satisfies the recurrence
\begin{align} \label{centralTermCompF}
    a_i \ = \
    \begin{cases}
        1                                             & i = 0   \\
        1+2z                                          & i = 1   \\
        \dfrac{(2i-1)(1+2z)a_{i-1} - (i-1)a_{i-2}}{i} & i \ge 2.
    \end{cases}
\end{align}
The solution to this recurrence relation is $a_i = P_i(2z+1)$, where $P_i$ is the $i$\textsuperscript{th} Legendre Polynomial
\begin{align}  \label{centralTermCompG}
    P_l(x) \ = \  \sum_{k=0}^{\ell} \binom{\ell}{k} \binom{-\ell-1}{k}\left(\frac{1-x}{2}\right)^k;
\end{align} see \cite{Ko}.
It follows that
\begin{align}  \label{centralTermCompH}
    a_n \ = \  P_n(2z+1) \ = \  \sum_{k=0}^n\binom{n}{k}\binom{n+k}{k}z^k.
\end{align}
Thus, changing variable names,
\begin{align}  \label{centralTermCompI}
    F_{n,n}(x) \ = \  b_n(x) \ = \  (1+x)^n a_n(x) \ = \  (1+x)^n\sum_{k=0}^n\binom{n}{k}\binom{n+k}{k}x^k,
\end{align}
which proves Theorem \ref{pathLenGenFn} in the $p=q=n$ case, as desired.

\subsection{Combinatorial Method}
We attempt to obtain a nicer formula for $g(p,n)$ by first relaxing our constraint to allow for paths with stationary points (where consecutive points are allowed to be exactly the same) and then later correcting for the over-counting. We do this as it is significantly easier to count the total number of paths with this relaxation in place.

Let $r(p,n)$ denote the number of paths from $p$ to the origin with this relaxed constraint.
Arguing as in the Stars and Bars problem\footnote{The number of ways to divide $N$ identical items into $G$ groups, where all that matters is how many items are placed in a group, is $\ncr{N+G-1}{G}$. To see this, consider $N+G-1$ items in a line; choosing $G-1$ of these partitions the remaining $N$ items into $G$ sets. The first set is all the elements up to the first chosen one, and so on. There is thus a one-to-one correspondence between the two counting problems. This method was first used for Zeckendorf decomposition problems in \cite{KKMW}, and has been successfully used in many works since then.}, it follows that
\be
    r(p,n) \ = \  \prod_{i = 1}^d\binom{p_i+n-1}{p_i}.
\ee
Let $s(p,n, k)$ be the number of paths to $p$ with at least $k$ stationary points.
Then
\be
    s(p,n,k) \ = \  \binom{n}k r(p,n-k).
\ee
By the principle of Inclusion-Exclusion, we have that the number of paths with no stationary points is
\begin{align} \label{orig-statement}
    g(p,n) \ = \  \sum\limits_{k=0}^{n-1} (-1)^k\binom{n}k r(p,n-k).
\end{align}

Now, the following identity and its proof serve as motivation for how to proceed in evaluating \eqref{orig-statement}.

\begin{lemma} \label{macc}
    For $m, n \in \mathbb{N}$,
    \be \label{maccEqn}
        \sum_{i = 0}^n \binom{n}{i} \binom{m+n-i}{k-i}(-1)^i \ = \  \binom{m}{k}.
    \ee
\end{lemma}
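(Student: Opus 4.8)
The plan is to prove \eqref{maccEqn} by coefficient extraction from a generating function, a technique that also shows how to attack the more elaborate alternating sum \eqref{orig-statement}. The starting point is to write $\binom{m+n-i}{k-i}$ as the coefficient of $x^{k-i}$ in $(1+x)^{m+n-i}$, equivalently as the coefficient of $x^k$ in $x^i(1+x)^{m+n-i}$. Since $0\le i\le n$ forces $m+n-i\ge m\ge 0$, every expression in sight is a genuine polynomial, so there is no convergence issue and the finite sum over $i$ commutes freely with coefficient extraction. (Here $\binom{m+n-i}{k-i}$ is taken to be $0$ when $i>k$, which is exactly the coefficient-extraction reading.)

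Carrying this out, one gets
\[
  \sum_{i=0}^n \binom{n}{i}\binom{m+n-i}{k-i}(-1)^i
  \ = \ [x^k]\ (1+x)^m \sum_{i=0}^n \binom{n}{i}(-x)^i (1+x)^{n-i},
\]
and the inner sum collapses by the binomial theorem to $\bigl((1+x)-x\bigr)^n = 1$. Hence the left side equals $[x^k](1+x)^m=\binom{m}{k}$, as claimed. There is essentially no hard step here; the only things to watch are the bookkeeping of the coefficient extraction and the convention $\binom{a}{b}=0$ for $b<0$.

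As a cross-check, and to motivate the general argument, I would also note the one-line inclusion--exclusion proof: take disjoint sets $M,N$ with $|M|=m$, $|N|=n$; then $\binom{n}{i}\binom{m+n-i}{k-i}$ counts pairs $(A,T)$ with $A\subseteq T\cap N$, $|A|=i$, and $T\subseteq M\cup N$, $|T|=k$. Summing $(-1)^i$ over $i$ first yields $\sum_i(-1)^i\binom{|T\cap N|}{i}$, which vanishes unless $T\cap N=\emptyset$, leaving precisely the $k$-subsets of $M$, i.e.\ $\binom{m}{k}$. The genuine obstacle is not this lemma but its generalization: in \eqref{orig-statement} the single binomial $\binom{m+n-i}{k-i}$ is replaced by $r(p,n-k)=\prod_{j}\binom{p_j+n-k-1}{p_j}$ from stars and bars, and one needs the generating-function form above (not the combinatorial one) as the workhorse, since it is what turns products of binomials into tractable closed forms. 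So in practice I would prove \eqref{maccEqn} by the generating-function route and then re-run the same manipulation with the product of binomials substituted in.
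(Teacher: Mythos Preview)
Your proof of the lemma itself is correct and clean: writing $\binom{m+n-i}{k-i}=[x^k]\,x^i(1+x)^{m+n-i}$ and collapsing the inner sum to $((1+x)-x)^n=1$ is a standard and airtight coefficient-extraction argument. Your inclusion--exclusion cross-check is also valid.

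However, the approach is genuinely different from the paper's, and your editorial remark about which method generalizes is backwards relative to what the paper actually does. The paper proves the lemma by a \emph{sign-reversing involution}: it interprets $\binom{n}{i}\binom{m+n-i}{k-i}$ as counting pairs $(S,T)$ with $S\subseteq[n]$, $T\subseteq[m+n]\setminus S$, $|S|+|T|=k$, and defines the involution that toggles the least element of $S\cup T$ between $S$ and $T$. The fixed points (``exceptions'') are exactly the pairs with $S=\emptyset$ and $T\subseteq[m+n]\setminus[n]$, giving $\binom{m}{k}$. It is precisely \emph{this} combinatorial machinery that the paper then extends to evaluate \eqref{orig-statement} in the $d=2$ case: the involution is upgraded to act on triples $(S,T,U)$ by toggling the least element of $S\cup(T\cap U)$, and counting the exceptions yields the closed form in \eqref{genPathsClosedB}. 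So your claim that ``one needs the generating-function form above (not the combinatorial one) as the workhorse'' is not how the paper proceeds. Your generating-function route can be pushed through in two dimensions as well---one lands on $[x^{n-1}y^{n-1}](1+x)^{p-1}(1+y)^{q-1}(1+x+y)^n$---but extracting that coefficient is no longer a one-liner, whereas the involution delivers the explicit triple-binomial sum directly.
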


Recall that $[n]$ means $\{1, 2, \dots, n\}$. The justification is as follows.

\begin{proof}
    We view the inner term as counting the number of ordered pairs $(S,T)$
    such that $S \subseteq [n]$, $T \subseteq [m+n] \setminus S$, and $|S| + |T| = k$. Let the set of all such valid ordered pairs be $V$.
    We consider the sign-reversing involution $f :V \setminus E \mapsto V \setminus E$ (where $E$ is the set of ``exceptions,'' for which $f$ is ill-defined) defined by toggling the smallest term in $S \cup T$ between $S$ and $T$.
    For example, when $k = 5$,
    \be \label{involutionEx}
        f(\{2,3,5\}, \{7,8\}) \ = \  (\{3,5\}, \{2,7,8\}).
    \ee

    Given this definition, it's not difficult to see that $f$ is its own inverse (hence, an involution) and always "flips" the parity of $|S|$, sending ordered pairs with a positive coefficient in our sum to pairs with negative coefficients (and vice versa).
    Therefore, for all the ordered pairs on which $f$ is well-defined, our desired sum is 0.
    Thus, we have
    \be \label{sumToSetExclusion}
        \sum_{i = 0}^n \binom{n}{i} \binom{m+n-i}{k-i}(-1)^i \ = \  |E|,
    \ee   and it's not difficult to see that the only pairs $(S, T) \in E$ are those where $S = \emptyset$ and $T \subseteq [m+n] \setminus [n]$. There are clearly $\binom{m}{k}$ such choices for these sets, and the desired result follows. \end{proof}

Viewed properly, our formula in \eqref{orig-statement} looks similar to the above lemma; we re-write it as
\begin{align} \label{genPathsClosedA}
    g(p,n) \ = \  \sum_{i = 0}^n (-1)^i \binom{n}{i} \prod_{k=1}^d \binom{(p_k - 1) + n-i}{(n-1)-i}.
\end{align}
Furthermore, when $d = 1$, this formula agrees with \eqref{macc}, as expected.
Although similar methods may be applied in higher dimensions, we now consider the special case $d = 2$.
\begin{theorem}
    For $p, q \in \mathbb{N}$,
    \begin{align}  \label{genPathsClosedB}
        g((p,q),n) \ = \  \sum_{i=0}^{n-1} \binom{p-1}{i}\binom{p-1+n-i}{p}\binom{q}{n-i-1}.
    \end{align}
\end{theorem}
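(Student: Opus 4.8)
We want to simplify the alternating sum in \eqref{genPathsClosedA} specialized to $d=2$, namely
\[
g((p,q),n) \ = \ \sum_{i=0}^{n} (-1)^i \binom{n}{i} \binom{(p-1)+n-i}{(n-1)-i} \binom{(q-1)+n-i}{(n-1)-i},
\]
into the single sum \eqref{genPathsClosedB}. The strategy is to eliminate the alternating sign using Lemma~\ref{macc}, exactly as the remark preceding it suggests. The obstruction to a direct application is that Lemma~\ref{macc} handles a product of \emph{one} "shifted" binomial $\binom{m+n-i}{k-i}$ against $\binom{n}{i}(-1)^i$, whereas here we have \emph{two} such shifted binomials. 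So the first step is to break one of the two shifted binomials apart into pieces that each look like the hypothesis of the lemma.

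**Key steps.** First I would expand one factor — say $\binom{(q-1)+n-i}{(n-1)-i}$ — using the Vandermonde/Chu convolution
\[
\binom{(q-1)+n-i}{(n-1)-i} \ = \ \sum_{j} \binom{q}{n-1-i-j}\binom{n-1}{j},
\]
or some similarly chosen splitting (the exact split should be picked so that the $i$-dependence on the $j$-summand isolates into the form $\binom{m'+n-i}{k'-i}$ for a $j$-dependent $m',k'$). Substituting this into $g((p,q),n)$ and interchanging the order of summation, the inner sum over $i$ becomes $\sum_i (-1)^i \binom{n}{i}\binom{(p-1)+n-i}{(n-1)-i}\binom{\text{something}-i}{\text{something}-i}$; if the split is chosen correctly the two shifted binomials in $i$ can be merged (via another Vandermonde on the upper indices, or by re-indexing) into the exact shape of \eqref{maccEqn}. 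Then Lemma~\ref{macc} collapses the $i$-sum to a single binomial coefficient independent of $i$. Finally I would clean up the remaining single sum over $j$, re-index to match the claimed form $\sum_{i=0}^{n-1}\binom{p-1}{i}\binom{p-1+n-i}{p}\binom{q}{n-i-1}$, checking that the summation bounds agree (terms outside $0\le i\le n-1$ vanish because the binomial coefficients are zero). A sanity check at small values, e.g. against the table \eqref{generalizedJumpPathsSmallPoints} — summing $g((p,q),n)$ over $n$ should recover those entries — would confirm the bookkeeping.

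**Main obstacle.** The hard part is choosing the right way to split the second binomial and then massaging the two remaining $i$-dependent factors into \emph{exactly} the hypothesis of Lemma~\ref{macc}; there is a genuine risk that the naive Vandermonde split leaves a product of two shifted binomials in $i$ that does not directly reduce, forcing either a second application of the lemma or a more clever single identity (a Saalsch\"utz-type $_3F_2$ evaluation). An alternative route that sidesteps this is to go through the generating function $F_{p,q}(x)$ of Theorem~\ref{pathLenGenFn} once it is established combinatorially: extract $[x^n] (1+x)^p \sum_k \binom{q}{k}\binom{p+k}{k}x^k$, which gives $\sum_k \binom{p}{n-k}\binom{q}{k}\binom{p+k}{k}$, and then reconcile that with \eqref{genPathsClosedB} via a binomial identity — but since in the paper's logical order \eqref{genPathsClosedB} is proved \emph{before} the combinatorial proof of Theorem~\ref{pathLenGenFn}, I would keep the Inclusion-Exclusion plus Lemma~\ref{macc} route as the primary argument and treat the identity-wrangling as the step to be most careful about.
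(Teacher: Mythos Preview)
Your plan is a plausible algebraic route, but it is not the paper's approach, and the obstacle you flag is real: splitting one factor by Vandermonde and then trying to force the remaining $i$-sum into the exact shape of Lemma~\ref{macc} does not fall out cleanly --- you end up needing a Saalsch\"utz-type evaluation rather than a single invocation of \eqref{maccEqn}. The paper sidesteps this entirely by \emph{generalizing the involution itself} rather than reducing to Lemma~\ref{macc}. Concretely, the $i$\textsuperscript{th} summand is read as counting triples $(S,T,U)$ with $S\subseteq[n]$, $T\subseteq[p+n-1]\setminus S$, $U\subseteq[q+n-1]\setminus S$, and $|S|+|T|=|S|+|U|=n-1$; the sign-reversing involution toggles the smallest element of $S\cup(T\cap U)$ simultaneously among all three sets. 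The fixed points are exactly the triples with $S=\emptyset$ and $T\cap U\cap[n]=\emptyset$, and counting those by conditioning on $i:=|T\cap U|$ (which must then lie in $[p+n-1]\setminus[n]$, a set of size $p-1$) gives the three binomials $\binom{p-1}{i}\binom{p-1+n-i}{p}\binom{q}{n-i-1}$ directly. So the paper's proof is a one-step combinatorial argument that never touches Vandermonde or hypergeometric identities; your approach trades that single involutive idea for a chain of binomial manipulations whose bookkeeping you correctly identify as the hard part.
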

\begin{proof}
    We first assume without loss of generality that $p \leq q$.
    We have that
    \begin{align}  \label{genPathsClosedC}
        g(p,n) \ = \  \sum_{i = 0}^n (-1)^i \binom{n}{i} \binom{(p - 1) + n-i}{(n-1)-i} \binom{(q - 1) + n-i}{(n-1)-i}.
    \end{align}
    We now proceed in a similar manner to the proof of Lemma \ref{macc}.
    We view the inner term as counting the number of ordered pairs $(S,T, U)$
    such that $S \subseteq [n]$, $T \subseteq [p+n-1] \setminus S$, $U \subseteq [q+n-1] \setminus S$, and $|S| + |T| = |S| + |U| = n-1$ . Let the set of all such valid ordered pairs be $V$.
    We consider the sign-reversing involution $f :V \setminus E \mapsto V \setminus E$ (where $E$ is the set of ``exceptions'' for which $f$ is ill-defined) defined by toggling the smallest term in $S \cup (T\cap U)$ between all three sets. Given this definition, it's not difficult to see that $f$ is its own inverse (hence, an involution) and always ``flips'' the parity of $|S|$, sending ordered pairs with a positive coefficient in our sum to pairs with negative coefficients (and vice versa).
    Hence, for all the ordered pairs on which $f$ is well-defined, our desired sum is 0.
    Thus, we have
    \begin{align} \label{genPathsClosedD}
        \sum_{i = 0}^n (-1)^i \binom{n}{i} \binom{(p - 1) + n-i}{(n-1)-i} \binom{(q - 1) + n-i}{(n-1)-i}\ = \  |E|,
    \end{align}
    and our problem reduces to counting $|E|$.
    For $f$ to be ill-defined, it is necessary and sufficient for $S = \emptyset$ and $T\cap U \cap [n] = \emptyset$. \\

    In order to prove this, we begin by indexing every tuple $(S,T,U)$ by fixing $i := |T \cap U|$. With $i$ fixed, we then choose the $i$ terms that are common to both $T$ and $U$, which must be a subset of $[p + n - 1] \setminus [n]$. Consequently, there are exactly $\binom{p-1}{i}$ ways to do this.
    We may now freely choose the remaining $n-i-1$ members of $T$ (of which there are $\binom{p+n-i - 1}{n-i-1}$ ways to do so) and the remaining $n-i-1$ members of $U$ (for which there are $\binom{q}{n-1}$ ways to select). Multiplying these three terms (and simplifying the second term), we find exactly the desired term, concluding the proof.
\end{proof}

\begin{proof}[Proof (Theorem \ref{pathLenGenFn})]
    We now use \eqref{unrestricted-restricted-recurrence-eq1} to obtain
    \begin{align} \label{genPathsClosedE}
        u((p,q),n) \ = \  \sum_{i=0}^n \binom{p}{i} \binom{p+n-i}{p}\binom{q}{n-i}.
    \end{align}
    By the definition of $F_{p,q}$ given in \eqref{f-def} we know that
    \begin{align} \label{genPathsClosedF}
        F_{p,q}(x) & \ = \  \sum_{k=0}^{p+q}\sum_{i=0}^k \binom{p}{i} \binom{p+k-i}{p}\binom{q}{k-i}x^k          \nonumber \\
                   & \ = \ \Bigg(\sum_{i=0}^p\binom{p}{i}x^i\Bigg) \Bigg(\sum_{k=0}^q \binom{q}{k}\binom{p+k}{p}\Bigg)    \nonumber \\
                   & \ = \  (1+x)^p\sum_{k=0}^{q} \binom{q}{k}\binom{p+k}{k} x^k,
    \end{align}
    which proves Theorem \ref{pathLenGenFn}.
\end{proof}

\section{Proving 2-Dimensional Gaussianity} \label{sec4}

We begin this section with a little bit of notation. Let $X_{p,q}$ to be a random variable counting the length generalized jump path starting from the point $(p, q)$ chosen uniformly at random from all such paths.
From Theorem \ref{pathLenGenFn}, we have
\be \label{gaussDecomp}
    X_{p,q} \ = \  A_{p,q} + B_{p,q},
\ee
where $A$ and $B$ are independent random variables proportional to a binomial coefficient and a product of binomial coefficients; explicitly
\be \label{gaussDecompRandomVar}
    P(A = k) \ \propto\  \binom{p}{k}, \ \ \ \    P(B = k) \ \propto\  \binom{q}{k}\binom{p+k}{k}.
\ee
Note $A$ is just the well-studied binomial random variable, which converges to a Gaussian with mean $p/2$ and variance $p/4 $ as $p \to \infty$.

We now prove Theorem \ref{thm:main}. We start by restating it with the notation above: \emph{
Suppose $p := n$ and $q := cn$ for $n \in \N^+$ and $c \geq 1$ is fixed. Then $X_{p, q}$ converges to a Gaussian as $n \rightarrow \infty$, with mean $\frac{q + p + \sqrt{p^2 + 6pq + q^2}}{4}$ and variance $\frac{p+q}8+\frac{(p+q)^2}{8\sqrt{p^2+6pq+q^2}}$.}

Due to our definition of $X_{p, q}$ in \eqref{gaussDecomp}, it suffices to show $B$ also converges to a Gaussian, with mean $\frac{q-p+\sqrt{p^2+6pq+q^2}}{4}$ and variance $\frac{q-p}8+\frac{(p + q)^2}{8\sqrt{p^2+6pq+q^2}}$ for the aforementioned choices of $p$ and $q$. The proof will use similar techniques to those found in \cite{CCGJMSY}; the algebra is more involved due to the more complicated structure of the formulas for the number of paths.

To begin, let $p := n$ and $q := cn$, where $c \geq 1$ is fixed. Then $P(B=k)$ is proportional to a ratio of factorials:
\be \label{stirlingApprox1} P(B = k) \ \propto\  \frac{q!}{p!}\frac{(p+k)!}{k!k!(q-k)!} \ = \  \frac{q!}{p!}\frac{(n+k)!}{k!k!(cn-k)!}.\ee
Applying Stirling's  formula to approximate the factorials in \eqref{stirlingApprox1}, we find for $p, q$ large that
\be  \label{stirlingApprox2} P(B = k) \ \propto\  \frac{q!}{p!}\frac{(n+k)^{n+k+\frac12}}{2\pi k^{2k+1}(cn-k)^{cn-k+\frac12}}.\ee

We now define
\begin{align}
    M \ :=\ \frac{(n+k)^{n+k+\frac12}}{k^{2k+1}(cn-k)^{cn-k+\frac12}} \label{m-def}
\end{align}
(i.e., we remove the terms in \eqref{stirlingApprox2} that are constant with respect to $k$). Taking the logarithm of both sides of (\ref{m-def}), we find \be \log M\ =\ \Big(n+k+\frac12\Big)\log(n+k) - \Big(2k+1\Big) \log(k) - \Big(cn-k+\frac12\Big)\log(cn-k).\ee
Now we write $k$ as $an + t\sqrt{n}$, where $a := \frac{c-1+\sqrt{c^2+6c+1}}4$. This allows us to introduce a more natural variable for the number of steps in a path, where this number is written in terms of its distance (in natural units) from the mean. We Taylor expand, using \be \log(u+x)\ =\ \log(u)+\log\left(1 + \frac{x}{u}\right)\ =\ \log(u)+\frac{x}{u} - \frac12\left(\frac{x}{u}\right)^2+O\left(\frac{x^3}{u^3}\right). \ee We do this because later in Lemma \ref{eq:decay} we show that $k=an$ is the center of the distribution, and almost all (i.e., with probability 1 in the limit) the mass of the distribution is located where $t$ is small. We find
\begin{align} \label{gaussProofLogDeriv}
    \log M \ = \ ~ & \Big(n+k+\frac12\Big)\log\Big(n\Big(1+a+\frac{t}{\sqrt{n}}\Big)\Big)                                                               \nonumber\\
              & -\Big(2k+1\Big)\log\Big(n\Big(a+\frac{t}{\sqrt{n}}\Big)\Big)                                                                       \nonumber\\
              & -\Big(cn-k+\frac12\Big)\log\Big(n\Big(c-a-\frac{t}{\sqrt{n}}\Big)\Big)                                                             \nonumber\\
    \ = \ ~        & (n-cn-1)\log(n)                                                                                                                    \nonumber\\
              & + \Big(n+k+\frac12\Big)\log\Big(1+a+\frac{t}{\sqrt{n}}\Big)                                                                        \nonumber\\
              & -\Big(2k+1\Big)\log\Big(a+\frac{t}{\sqrt{n}}\Big)                                                                                  \nonumber\\
              & -\Big(cn-k+\frac12\Big)\log\Big(c-a-\frac{t}{\sqrt{n}}\Big)                                                                        \nonumber\\
    \ = \ ~        & (n-cn-1)\log(n)                                                                                                                    \nonumber\\
              & + \Big(n+k+\frac12\Big)\Big(\log(1+a)+\frac{t}{(1+a)\sqrt{n}}-\frac{t^2}{2(1+a)^2n}+O\Big(\frac{t^3}{n^{3/2}}\Big)\Big)            \nonumber\\
              & - \Big(2k+1\Big)\Big(\log(a)+\frac{t}{a\sqrt{n}}-\frac{t^2}{2a^2n}+O\left(\frac{t^3}{n^{3/2}}\right)\Big)                          \nonumber\\
              & - \left(cn-k+\frac12\right)\left(\log(c-a)-\frac{t}{(c-a)\sqrt{n}}-\frac{t^2}{2(c-a)^2n}+O\left(\frac{t^3}{n^{3/2}}\right)\right).
\end{align}

After standard but tedious computations, the details of which are in Appendix \ref{app:computations}, we obtain
\be \label{gaussProofE}
    \log M \ = \   -\chi t^2 + f(n) + O\left(\frac{t^3}{\sqrt{n}}\right),
\ee
where \bea \chi\ =\ \frac{2 c^2 + 10 c^3 - 10 c^4 - 2 c^5 +
        2 c^2 \sqrt{1 + c (6 + c)} + 4 c^3 \sqrt{1 + c (6 + c)} +
        2 c^4 \sqrt{1 + c (6 + c)}}{8c^4}\nonumber\\ \eea and $f(n)$ is independent of $k$ and $t$.

We now prove the following lemma, which demonstrates that the tails of this distribution decay sufficiently quickly.

\vspace{0.2 cm}

\begin{lemma} \label{eq:decay}
    As $n \to \infty$, $P(|t|>n^{0.1}) \to 0$.
\end{lemma}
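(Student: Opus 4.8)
The plan is to establish the tail-decay bound directly from the expansion \eqref{gaussProofE} for $\log M$, together with the fact (from Theorem \ref{pathLenGenFn}) that $P(B=k)$ is proportional to $\binom{q}{k}\binom{p+k}{k}$, which is supported on $0 \le k \le q$. Recall that we wrote $k = an + t\sqrt{n}$, so the event $\{|t| > n^{0.1}\}$ corresponds to $\{|k - an| > n^{0.6}\}$. Since $P(B = k)$ is proportional to $M$ up to factors that do not depend on $k$ (the Stirling correction terms are uniformly bounded on the relevant range), and since $\log M = -\chi t^2 + f(n) + O(t^3/\sqrt{n})$ with $\chi > 0$, we have for each fixed $k$ in the bulk that $P(B=k) \asymp e^{-\chi t^2}$ up to the $k$-independent normalization. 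The key point is that on the range $|t| \le n^{0.1}$ the error term $O(t^3/\sqrt n)$ is $O(n^{0.3}/n^{0.5}) = O(n^{-0.2}) \to 0$, so the Gaussian approximation is valid precisely at the window boundary.

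First I would make the normalization explicit: writing $C_n := \sum_k \binom{q}{k}\binom{p+k}{k}$, we have $P(B=k) = \binom{q}{k}\binom{p+k}{k}/C_n$, and from the local expansion the central terms $k$ with $|t| = O(1)$ each contribute $\asymp n^{-1/2}$ of the total mass, so $C_n \asymp n^{1/2} \cdot (\text{max term})$. Hence it suffices to bound
\be
    P(|t| > n^{0.1}) \ = \ \frac{1}{C_n}\sum_{\substack{k:\ |k-an| > n^{0.6} \\ 0 \le k \le q}} \binom{q}{k}\binom{p+k}{k}.
\ee
Second, I would split the sum over $k$ into the \emph{near-tail} region $n^{0.6} < |k - an| \le n^{0.6+\delta}$ (for a small fixed $\delta$, say $\delta = 0.05$) where the quadratic expansion \eqref{gaussProofE} is still controlled, and the \emph{far-tail} region where $|k-an|$ is a positive proportion of $n$. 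In the near-tail region, $\log M = -\chi t^2 + f(n) + O(t^3/\sqrt n)$ still holds; since $|t| \le n^{0.1+\delta}$ the error is $O(n^{-0.2+3\delta})$, still $o(1)$, so each term is at most $e^{-\chi t^2/2} \cdot (\text{max term})$ once $n$ is large, and summing the geometric-like series over $|t| > n^{0.1}$ gives a bound of order $\sqrt{n}\, e^{-\chi n^{0.2}/2} \cdot (\text{max term})$, which is $o(C_n)$. In the far-tail region, I would use a crude monotonicity/ratio argument: the ratio of consecutive terms $\frac{\binom{q}{k+1}\binom{p+k+1}{k+1}}{\binom{q}{k}\binom{p+k}{k}} = \frac{(q-k)(p+k+1)}{(k+1)^2}$ is a decreasing function of $k$ that equals $1$ at the mode (near $k = an$) and is bounded away from $1$ once $|k-an|$ is a fixed proportion of $n$, so the far tail is geometrically dominated and contributes an exponentially small fraction of $C_n$.

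The main obstacle is controlling the error term $O(t^3/\sqrt n)$ uniformly as $t$ grows past $n^{0.1}$: the cubic (and higher) terms in the Taylor expansion of $\log M$ eventually overwhelm the quadratic term, so the naive Gaussian estimate breaks down when $|t| \gtrsim n^{1/6}$ and cannot be pushed to the far tail. This is precisely why the two-region split is needed — one must patch the near-tail Gaussian estimate to the far-tail monotone-ratio estimate — and care is required to verify the ratio of consecutive binomial products is genuinely bounded away from $1$ on the whole far-tail range (using $p = n$, $q = cn$, and $a = \tfrac{c-1+\sqrt{c^2+6c+1}}{4}$, which is exactly the value making the ratio equal $1$ at $k = an$). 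Once both regions are bounded by $o(C_n)$, combining them yields $P(|t| > n^{0.1}) \to 0$, completing the proof.
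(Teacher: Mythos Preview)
Your two-region split leaves an uncovered range: the near-tail handles $n^{0.6} < |k-an| \le n^{0.65}$, while your far-tail is described as the region where $|k-an|$ is a fixed positive proportion of $n$, so the intermediate zone $n^{0.65} < |k-an| < \varepsilon n$ is not addressed. Your far-tail ratio bound only controls the sum there relative to its \emph{first} term, and that first term (at $|k-an|=\varepsilon n$) has not been estimated. One can patch this via unimodality (each intermediate term is dominated by the last near-tail term), but as written the argument is incomplete.

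More importantly, the near-tail Gaussian estimate is unnecessary: the ratio argument you reserve for the far tail already works over the \emph{entire} tail. A one-term expansion of the consecutive ratio $\dfrac{(p+k)(q-k)}{k^2}$ about $k=an$ gives $1 - c'\,t/\sqrt{n} + O(t^2/n)$, so for every $k \ge an+\sqrt{n}$ (i.e.\ $t\ge 1$) the ratio is at most $r = 1 - c'/\sqrt{n}$, not merely when $|k-an|=\Theta(n)$. This is exactly what the paper does: with $B_0$ the term at $t=1$, the sum over $t>n^{0.1}$ is bounded by a geometric series with first term $B_0\,r^{\,n^{0.6}-n^{0.5}}$ and ratio $r$; since the total mass is at least $B_0$, the tail probability is at most $\dfrac{r^{\,n^{0.6}-n^{0.5}}}{1-r}\approx \dfrac{\sqrt{n}}{c'}\,(e^{-c'})^{\,n^{0.1}-1}\to 0$. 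No appeal to the quadratic expansion \eqref{gaussProofE} or any region-splitting is needed.
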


\begin{proof}
    First, we show $P(t>n^{0.1})$ goes to 0; the proof for $P(t<-n^{0.1})$ follows similarly.

    Note that if we increase $k$ by 1, then the number of paths of length $k$ increases by a factor of around $\frac{(p+k)(q-k)}{k^2}$ (note that this is strictly decreasing in $k$). Plugging in $k = an+t\sqrt{n}$, we see that the proportion at which the number of paths increases is
    \begin{align} \label{pathIncrProportion}
        1 - \left(\frac{8\sqrt{c^2+6c+1}}{(1+c)^2+(c-1)\sqrt{c^2+6c+1}}\frac{t}{\sqrt{n}}\right) + O\left(\frac{t^2}n\right).
    \end{align}
    Let the coefficient of the $t/\sqrt{n}$ term be $c'$. Plugging in $t=1$, and ignoring the higher order terms means that for $k \geq an+\sqrt{n}$, the ratio will be at most $r = 1- c'\sqrt{n}$. Let $B_0$ be the number of paths of length $k$ when $t = 1$. The number of paths with $t>n^{0.1}$ is bounded above by a geometric series of first term $B_0\cdot r^{n^0.6-n^0.5}$ and ratio $r$, and we have
    \begin{align}
        r^{n^{0.5}} \ = \  \left(1-\frac{c'}{\sqrt{n}}\right)^{\sqrt{n}} \approx e^{-c'}<1.
    \end{align}
    Thus as $n$ goes to infinity, $B_0\cdot r^{n^0.6-n^0.5} = B_0\cdot \left(e^{-c'}\right)^{n^{0.1}-1}$, where the last exponent, $n^{0.1}-1$, goes to infinity.
    Now note that the total number of paths of any length is strictly greater than $B_0$, so the probability that $t > n^{0.1}$ is at most
    \begin{align}
        \frac{B_0\cdot \left(e^{-c'}\right)^{n^{0.1}-1}/(1-r)}{B_0} \ = \  \frac{\sqrt{n}}{c'}\left(e^{-c'}\right)^{n^{0.1}-1},
    \end{align}
    which goes to 0 as $n$ grows. Thus, the probability that $t > n^{0.1}$ goes to 0 as $n$ gets large, as desired.
\end{proof}

Consequently, we know that $t^3/\sqrt{n} \to 0$ as $n \to \infty$, so by sending $p$ and $q$ to $\infty$,
\begin{align} \label{proportionEquality}
    P(B = k) \ = \  Ce^{-\chi t^2} \ = \  Ce^{-\frac{(an-k)^2}{n/\chi}},
\end{align}
where $C$ is some constant in terms of $p$ and $q$. This is the equation for a Gaussian Distribution with mean
\begin{align} \label{GaussianBMean}
    an \ = \  \frac{q-p+\sqrt{p^2+6pq+q^2}}4
\end{align}
and variance
\begin{align} \label{GaussianBVariance}
    \frac{n}{2\chi} \ = \  n\left(\frac{c-1}8+\frac{c^2+2c+1}{8\sqrt{1+6c+c^2}}\right) \ = \  \frac{q-p}8+\frac{q^2+2pq+p^2}{8\sqrt{p^2+6pq+q^2}}.
\end{align}
We conclude that $X_{p,q}$ is Gaussian with mean
\begin{align}
    E[X_{p,q}] \ = \  E[A_{p,q}] + E[B_{p,q}] \ = \  \frac{p}2+\frac{q-p+\sqrt{p^2+6pq+q^2}}4 \ = \  \frac{p+q}4+\frac{\sqrt{p^2+6pq+q^2}}4,
\end{align} and variance
\begin{eqnarray}
    \Var\left(X_{p,q}\right) & \ = \ &  \Var\left(A_{p,q}\right) + \Var\left(B_{p,q}\right) \nonumber\\ & \ = \  & \frac{p}4+\frac{q-p}8+\frac{q^2+2pq+p^2}{8\sqrt{p^2+6pq+q^2}} \ = \  \frac{p+q}8+\frac{(p+q)^2}{8\sqrt{p^2+6pq+q^2}}.
\end{eqnarray}
This completes the proof of Theorem \ref{thm:main}. \hfill $\Box$

\section{Further Work and Open Questions} \label{sec5}
\label{s:conclusion}

We conclude with some suggestions of future research.

\begin{enumerate}
    \item Is it possible to generalize the generating function approach for points not on the diagonal?
          Although empirical results suggest similar behavior, the associated Taylor expansion becomes significantly harder to work with.

    \item How do our results generalize to higher dimensions? Our combinatorial approach admits a natural extension to higher dimensions, although the casework becomes significantly more cumbersome. Furthermore, it is of note that the generating function approach does not generalize to three or more dimensions.

    \item How quickly does our distribution converge to a Gaussian?

    \item What is the behavior if we allow a point to be used more than once? Of course, if we can use a point arbitrarily many times it is unclear how to define the terms of our sequence; thus the natural question would be what happens if each point can be used at most $T$ times, for some fixed $T$.

    \item There are even more combinatorial questions worth exploring. Many different additional restrictions can be put on the compound jump paths. One such restriction is prohibiting any path from visiting any points that lie above the line $y = x$.
\end{enumerate}

\appendix


\section{Gaussianity Calculations}\label{app:computations}

We give the simplification of $\log M$ from \eqref{m-def}. We have that
\begin{align}
    \log M & \ = \
    -\frac{t}{a \sqrt{n}}+\frac{t}{2 (1+a) \sqrt{n}}+\frac{t}{2 (-a+c) \sqrt{n}}-\frac{2 k t}{a \sqrt{n}}+\frac{k t}{(1+a) \sqrt{n}}                     \nonumber\\
           & \hspace{0.5cm} -\frac{k t}{(-a+c) \sqrt{n}}+\frac{\sqrt{n} t}{1+a}+\frac{c \sqrt{n} t}{-a+c}-\frac{t^2}{2 (1+a)^2}+\frac{c t^2}{2 (-a+c)^2} \nonumber\\
           & \hspace{0.5cm} +\frac{t^2}{2 a^2 n}-\frac{t^2}{4 (1+a)^2 n}+\frac{t^2}{4 (-a+c)^2 n}+\frac{k t^2}{a^2 n}-\frac{k t^2}{2 (1+a)^2 n}          \nonumber\\
           & \hspace{0.5cm}-\frac{k t^2}{2 (-a+c)^2 n}-\log(a)-2 k \log(a)+\frac{1}{2}\log(1 + a)+k \log(1 + a)                                          \nonumber\\
           & \hspace{0.5cm}+ n \log(1 + a)-\frac{1}{2} \log(-a+c)+k \log(-a+c)-c n \log(-a+c).
\end{align}

Substituting $a = \frac{c-1+\sqrt{c^2+6c+1}}4$, we find that this expression for $\log M$ is equal to
\begin{align}
     & -\frac{4 t}{\left(-1+c+\sqrt{1+6 c+c^2}\right) \sqrt{n}}+\frac{t}{2 \left(c+\frac{1}{4} \left(1-c-\sqrt{1+6 c+c^2}\right)\right) \sqrt{n}}                                                                    \nonumber\\
     & +\frac{t}{2 \left(1+\frac{1}{4} \left(-1+c+\sqrt{1+6 c+c^2}\right)\right) \sqrt{n}}+\frac{c \sqrt{n} t}{c+\frac{1}{4} \left(1-c-\sqrt{1+6 c+c^2}\right)}                                                      \nonumber\\
     & +\frac{\sqrt{n}t}{1+\frac{1}{4} \left(-1+c+\sqrt{1+6 c+c^2}\right)}+\frac{c t^2}{2 \left(c+\frac{1}{4} \left(1-c-\sqrt{1+6 c+c^2}\right)\right)^2}                                                            \nonumber\\
     & -\frac{t^2}{2\left(1+\frac{1}{4} \left(-1+c+\sqrt{1+6 c+c^2}\right)\right)^2}+\frac{8 t^2}{\left(-1+c+\sqrt{1+6 c+c^2}\right)^2 n}                                                                            \nonumber\\
     & +\frac{t^2}{4 \left(c+\frac{1}{4}\left(1-c-\sqrt{1+6 c+c^2}\right)\right)^2 n}-\frac{t^2}{4 \left(1+\frac{1}{4} \left(-1+c+\sqrt{1+6 c+c^2}\right)\right)^2 n}                                                \nonumber\\
     & -\frac{8 t \left(\frac{1}{4} \left(-1+c+\sqrt{1+6 c+c^2}\right) n+\sqrt{n} t\right)}{\left(-1+c+\sqrt{1+6 c+c^2}\right) \sqrt{n}}-\frac{t \left(\frac{1}{4} \left(-1+c+\sqrt{1+6
            c+c^2}\right) n+\sqrt{n} t\right)}{\left(c+\frac{1}{4} \left(1-c-\sqrt{1+6 c+c^2}\right)\right) \sqrt{n}}                                                                                                \nonumber\\
     & +\frac{t \left(\frac{1}{4} \left(-1+c+\sqrt{1+6c+c^2}\right) n+\sqrt{n} t\right)}{\left(1+\frac{1}{4} \left(-1+c+\sqrt{1+6 c+c^2}\right)\right) \sqrt{n}}+\frac{16 t^2 \left(\frac{1}{4} \left(-1+c+\sqrt{1+6
            c+c^2}\right) n+\sqrt{n} t\right)}{\left(-1+c+\sqrt{1+6 c+c^2}\right)^2 n}.
\end{align}

Continuing the simplification, we see the above equals
\begin{align}                                                                                                                                                  \nonumber\\
     & -\frac{t^2 \left(\frac{1}{4} \left(-1+c+\sqrt{1+6 c+c^2}\right) n+\sqrt{n}t\right)}{2 \left(c+\frac{1}{4} \left(1-c-\sqrt{1+6 c+c^2}\right)\right)^2 n}-\frac{t^2 \left(\frac{1}{4} \left(-1+c+\sqrt{1+6 c+c^2}\right) n+\sqrt{n}
        t\right)}{2 \left(1+\frac{1}{4} \left(-1+c+\sqrt{1+6 c+c^2}\right)\right)^2 n}                                                                                                                                                   \nonumber\\
     & -\log\left(\frac{1}{4} \left(-1+c+\sqrt{1+6 c+c^2}\right)\right)
    \nonumber\\
     & -2
    \left(\frac{1}{4} \left(-1+c+\sqrt{1+6 c+c^2}\right) n+\sqrt{n} t\right) \log\left(\frac{1}{4} \left(-1+c+\sqrt{1+6 c+c^2}\right)\right)
    \nonumber\\
     & -\frac{1}{2}
    \log\left(c+\frac{1}{4} \left(1-c-\sqrt{1+6 c+c^2}\right)\right)-c n \log\left(c+\frac{1}{4} \left(1-c-\sqrt{1+6 c+c^2}\right)\right)                                                                                                \nonumber\\
     & +\left(\frac{1}{4}
    \left(-1+c+\sqrt{1+6 c+c^2}\right) n+\sqrt{n} t\right) \log\left(c+\frac{1}{4} \left(1-c-\sqrt{1+6 c+c^2}\right)\right)                                                                                                              \nonumber\\
     & +\frac{1}{2} \log\left(1+\frac{1}{4}
    \left(-1+c+\sqrt{1+6 c+c^2}\right)\right)+n \log\left(1+\frac{1}{4} \left(-1+c+\sqrt{1+6 c+c^2}\right)\right)                                                                                                                        \nonumber\\
     & +\left(\frac{1}{4} \left(-1+c+\sqrt{1+6
            c+c^2}\right) n+\sqrt{n} t\right) \log\left(1+\frac{1}{4} \left(-1+c+\sqrt{1+6 c+c^2}\right)\right).
\end{align}

Simplifying and collecting like terms, we have $\log M$ equals
\begin{align}\frac{1}{8 c^4 n}         & \left(t^2+6 c t^2+7 c^2 t^2+4 c^3 t^2-3 c^4 t^2-6 c^5 t^2-c^6 t^2 +\sqrt{1+c (6+c)} t^2\right.                 \nonumber\\
                              & +3 c \sqrt{1+c (6+c)} t^2+2 c^2 \sqrt{1+c (6+c)} t^2                                                           \nonumber\\
                              & \left. -2 c^3 \sqrt{1+c (6+c)} t^2+3 c^4 \sqrt{1+c (6+c)} t^2+c^5 \sqrt{1+c (6+c)} t^2\right)                  \nonumber\\
    +\frac{1}{8 c^4 \sqrt{n}} & \left(2 c^2 t+2 c^3 t+10 c^4
    t+2 c^5 t+2 c^2 \sqrt{1+c (6+c)} t-4 c^3 \sqrt{1+c (6+c)} t \right.                                                                        \nonumber\\
                              & -2 c^4 \sqrt{1+c (6+c)} t-2 t^3-12 c t^3-6 c^2 t^3+8 c^3 t^3-6 c^4 t^3-12 c^5 t^3                              \nonumber\\
                              & -2 c^6   t^3-2 \sqrt{1+c (6+c)} t^3-6 c \sqrt{1+c (6+c)} t^3+4 c^2 \sqrt{1+c (6+c)} t^3                        \nonumber\\
                              & \left.-4 c^3 \sqrt{1+c (6+c)} t^3+6 c^4 \sqrt{1+c (6+c)} t^3+2 c^5 \sqrt{1+c (6+c)} t^3\right)                 \nonumber\\
    + \frac{1}{8 c^4}         & \left(-2 c^2 t^2-10 c^3 t^2+10 c^4 t^2+2 c^5 t^2-2 c^2 \sqrt{1+c (6+c)} t^2-4 c^3 \sqrt{1+c (6+c)} t^2 \right. \nonumber\\
                              & -2 c^4 \sqrt{1+c (6+c)} t^2+8 c^4 \log(8)-4 c^4 \log\left(4+12 c-4 \sqrt{1+c (6+c)}\right)                     \nonumber\\
                              & \left. -8 c^4 \log\left(-1+c+\sqrt{1+c (6+c)}\right)+4 c^4 \log\left(3+c+\sqrt{1+c
                (6+c)}\right)\right)                                                                                                           \nonumber\\
    +\frac{\sqrt{n}}{8c^4}
                              & \left(8 c^4 t \log\left(1+3 c-\sqrt{1+c (6+c)}\right)
    -16 c^4 t \log\left(-1+c+\sqrt{1+c (6+c)}\right)\right.                                                                                    \nonumber\\
                              & \left.+8c^4 t \log\left(3+c+\sqrt{1+c (6+c)}\right)\right).
\end{align}

Finally, we obtain that $\log M$ is
\begin{align}
    +\frac{n}{8 c^4} & \left(-8 c^4 \log(4)+8 c^5 \log(4)-2 c^4 \sqrt{1+c
            (6+c)} \log(4)-2 c^4 \log\left(1+3 c-\sqrt{1+c (6+c)}\right) \right.                                                 \nonumber\\
                     & -6 c^5 \log\left(1+3 c-\sqrt{1+c (6+c)}\right)+2 c^4 \sqrt{1+c (6+c)}
    \log\left(1+3 c-\sqrt{1+c (6+c)}\right)                                                                                      \nonumber\\
                     & +4 c^4 \log\left(-1+c+\sqrt{1+c (6+c)}\right)-4 c^5 \log\left(-1+c+\sqrt{1+c (6+c)}\right)                \nonumber\\
                     & -4c^4 \sqrt{1+c (6+c)} \log\left(-1+c+\sqrt{1+c (6+c)}\right)+6 c^4 \log\left(3+c+\sqrt{1+c (6+c)}\right) \nonumber\\
                     & \left.+2 c^5 \log\left(3+c+\sqrt{1+c
                (6+c)}\right)+2 c^4 \sqrt{1+c (6+c)} \log\left(4 \left(3+c+\sqrt{1+c (6+c)}\right)\right)\right).
\end{align}

We now verify that the $t \sqrt{n}$ term has coefficient 0.
Our desired coefficient is
\bea & &  8 c^4 t \log\left(1+3 c-\sqrt{1+c (6+c)}\right)-16 c^4 t \log\left(-1+c+\sqrt{1+c (6+c)}\right)\nonumber\\ & & \ \ \ \ +\ 8c^4 t \log\left(3+c+\sqrt{1+c (6+c)}\right).
\eea

Exponentiating this expression, we obtain
\be
    \frac{\left(1+3 c-\sqrt{1+c (6+c)}\right) \left(3+c+\sqrt{1+c (6+c)}\right)}{\left(-1+c+\sqrt{1+c (6+c)}\right)^2} \ = \  1
\ee
as desired, completing the computations.

Consequently, as claimed, we have that $\log M$ is of the form
\be \label{logFactorForm}
    \log M \ = \   -\chi t^2 + f(n) + O\left(\frac{t^3}{\sqrt{n}}\right),
\ee
where $\chi$ is defined as \be\chi \ = \  \frac{2 c^2 + 10 c^3 - 10 c^4 - 2 c^5 +
        (2 c^2 + 4c^3 + 2c^4)\sqrt{1 + c (6 + c)}}{8c^4},\ee and $f(n)$ is independent of $k$ or $t$.



\newcommand{\etalchar}[1]{$^{#1}$}

\ \\

\end{document}